\newtheorem{theorem}{Theorem}[section]
\newtheorem{lemma}[theorem]{Lemma}
\newtheorem{corollary}[theorem]{Corollary}
\theoremstyle{definition}
\newtheorem{definition}[theorem]{Definition}
\theoremstyle{remark}
\newcommand{\itemprefix}{}
\newcommand{\myitem}{%
\item\protected@edef\@currentlabel{\itemprefix\theenumi}%
}
\def\arhang{Arhangel'skii}
\def\con{\subseteq}
\def\reals{\mathbb{R}}
\def\naturals{\mathbb{N}}
\def\interval{\mathbb{I}}
\def\from{\colon}
\def\cont{\mathfrak{c}}
\newcommand\ZFC{\ensuremath{\mathsf{ZFC}}}
\def\weight#1{w(#1)}
\def\Cech{\v{C}ech}
\def\CS{\Cech-Stone}
\def\orpr#1#2{\langle #1,#2 \rangle}
\def\int{\mathop{\operator@font int}\nolimits}
\author[I. Juh\'asz]{Istv\'an Juh\'asz}
\address      { Alfr\'ed Rényi Institute of Mathematics%
%, Eötvös Loránd Research Network
}
\email{juhasz@renyi.hu}
\author[J. van Mill]{Jan van Mill}
\address{University of Amsterdam}
\email{j.vanMill@uva.nl}
\thanks{The first author was supported
by NKFIH grant no. K129211.}
\subjclass[2020]{54A25, 54C05, 54D30}
\keywords{Homogeneous space, topological group, weight, continuous image.}
\title{Homogeneous continuous images of smaller weight}
\begin{document}

\begin{abstract}
We show that every infinite crowded space can be mapped onto a homogeneous space of countable weight, and that there is a homogeneous space of weight $\cont$ that cannot be mapped onto a homogeneous space of weight strictly between $\omega$ and $\cont$.
\end{abstract}

\maketitle

\section{Introduction}
For a space $X$, we let $w(X)$ denote its weight.
Tkachenko~\cite{Tkachenko80} proved that if $X$ is a Tychonoff space, then for every infinite regular cardinal $\tau \le w(X)$,
there is a continuous image of $X$ of weight $\tau$.
The same result was obtained independently but later by Juh\'asz in~\cite{Juhasz96}. Several related `reflecting' properties were found for the class of $\omega$-narrow topological groups in~\cite{HernandezTkachenko17}. It was shown there for example that every $\omega$-narrow topological group $G$ admits a continuous homomorphism onto a topological group of weight $\tau$, where $\tau\le w(G)$ is any infinite regular cardinal. See \cite{HernandezTkachenko17} for more results and references. It is an intriguing problem whether the assumption about the regularity of $\tau$ is essential in these results.

Although it is not stated explicitly in~\cite{HernandezTkachenko17}, their assumption on $\omega$-narrowness is essential. Examples are easily found. Koppelberg~\cite{koppelberg:homogeneousba} proved that the homeomorphism group of any $D(2)^\kappa$, $\kappa\ge \omega$, is algebraically simple (for $\kappa=\omega$, this is due to Anderson~\cite{Anderson58a}). Hence the homeomorphism group of $D(2)^{\omega_1}$ endowed with the compact-open topology, being of weight $\omega_1$, does not admit a continuous homomorphic image of weight $\omega$.

As far as we know, for the class of topologically homogeneous spaces no `reflecting' properties were obtained in the literature. Since there are many for topological groups, it is a natural question whether these can be extended to the more general class of homogeneous spaces. See \cite{ArhangMill2014} for a survey containing old and new problems on homogeneous spaces. We show that the results on topological groups mentioned above cannot be extended in \ZFC\ to homogeneous spaces by proving that every infinite crowded space can be mapped onto an infinite homogeneous space of countable weight, and that there is a homogeneous space of weight $\cont$ that cannot be mapped onto any homogeneous space of weight strictly between $\omega$ and $\cont$. We also comment on the cardinality of (generalizations of) almost-compact spaces.

\section{Continuous images of countable weight}
In this and the next section, all spaces under discussion are Tychonoff.

As usual, $\naturals$, $\reals$ and $\interval$ denote the spaces of natural numbers, real numbers and the closed interval $[0,1]$, respectively.

\begin{lemma}
Every infinite crowded space maps onto $\naturals$ or $\interval$.
\end{lemma}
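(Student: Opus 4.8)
The plan is to split into two cases according to whether $X$ is zero-dimensional, and to treat the zero-dimensional case by a Cantor-scheme construction in which the very dichotomy that decides surjectivity onto $\interval$ produces, in the opposite case, a map onto $\naturals$.

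First I would dispose of the non-zero-dimensional case through the following observation: if \emph{every} continuous $f\from X\to\reals$ has totally disconnected range, then $X$ is zero-dimensional. Indeed, given $x$ in an open set $U$, Tychonoff-ness yields $f\from X\to\interval$ with $f(x)=0$ and $f\equiv 1$ on $X\setminus U$; since $\ran f$ is totally disconnected it omits some $t\in(0,1)$, whence $f^{-1}[0,t)=f^{-1}[0,t]$ is a clopen neighbourhood of $x$ contained in $U$. Taking the contrapositive, if $X$ is \emph{not} zero-dimensional there is a continuous real-valued function whose range contains a nondegenerate interval $[a,b]$; composing with the retraction $t\mapsto\min\{\max\{t,a\},b\}$ and rescaling $[a,b]$ affinely onto $\interval$ gives a continuous surjection of $X$ onto $\interval$. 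Crowdedness is not needed here.

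So assume $X$ is zero-dimensional; now crowdedness enters. Every nonempty clopen $C\con X$ is again crowded, hence infinite, hence (being zero-dimensional and Hausdorff) splits as a disjoint union of two nonempty clopen sets. Using this repeatedly I would build a Cantor scheme $\{F_s:s\in 2^{<\omega}\}$ of nonempty clopen sets with $F_{\empt}=X$ and $F_s=F_{s^\frown 0}\sqcup F_{s^\frown 1}$. At each level $n$ the family $\{F_s:s\in 2^n\}$ is a finite clopen partition of $X$, so the coordinate map $\chi_n\from X\to\{0,1\}$ (with $\chi_n(x)=1$ iff $x$ lies in some $F_s$, $s\in 2^{n+1}$, with $s(n)=1$) is continuous. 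Then $f:=\sum_n\chi_n\,2^{-n-1}$ converges uniformly and defines a continuous map $f\from X\to\interval$; concretely $f(x)=\varphi(\sigma(x))$, where $\sigma(x)\in 2^\omega$ is the branch with $x\in F_{\sigma(x)\restriction n}$ for all $n$ and $\varphi\from 2^\omega\to\interval$ is the standard binary surjection.

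The heart of the matter is the dichotomy on the branches of the scheme. If for every $\tau\in 2^\omega$ the nested intersection $\bigcap_n F_{\tau\restriction n}$ is nonempty, then picking $x_\tau$ in it gives $\sigma(x_\tau)=\tau$, so $f(X)\supseteq\varphi(2^\omega)=\interval$ and $f$ is the desired surjection onto $\interval$. Otherwise some branch $\tau$ has $\bigcap_n F_{\tau\restriction n}=\empt$; then the ``sibling'' sets $G_n:=F_{\tau\restriction n}\setminus F_{\tau\restriction(n+1)}$ are nonempty, clopen, pairwise disjoint, and telescope to $X$, so $\{G_n:n\in\omega\}$ is a partition of $X$ into countably many nonempty clopen pieces and the map sending $G_n$ to $n$ is a continuous surjection onto $\naturals$. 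Either way $X$ maps onto $\naturals$ or $\interval$. The step I expect to require the most care is verifying that the empty-branch alternative yields a genuine countable clopen \emph{partition} (via the telescoping identity $\bigcup_n G_n=X\setminus\bigcap_n F_{\tau\restriction n}$), since it is exactly here that ``no surjection onto $\interval$'' is converted into ``surjection onto $\naturals$.''
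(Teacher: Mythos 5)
Your proposal is correct, but after the common first step it takes a genuinely different route from the paper. Both arguments begin the same way: if some continuous real-valued image of $X$ is not zero-dimensional (equivalently, for subsets of $\reals$, not totally disconnected), it contains a nondegenerate closed interval onto which one can retract, giving a surjection onto $\interval$; otherwise $X$ itself is zero-dimensional. From there the paper proceeds by successive reductions using classical machinery: if some real image is non-compact it contains a closed copy of $\naturals$ which is a retract of the (zero-dimensional) image; so assume $X$ is pseudocompact; if some real image is uncountable it contains a Cantor set which is a retract and maps onto $\interval$; and in the remaining case it takes a zero-dimensional compactification $bX$, uses the fact that a crowded compact zero-dimensional space admits a continuous surjection onto $D(2)^\omega$, and invokes pseudocompactness to conclude $f(X)=D(2)^\omega$, which maps onto $\interval$. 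You instead work entirely inside $X$: crowdedness plus zero-dimensionality lets every nonempty clopen set be split in two, yielding a Cantor scheme $\{F_s : s\in 2^{<\omega}\}$ of clopen sets, and the binary-expansion map $f=\sum_n \chi_n 2^{-n-1}$ is continuous; the dichotomy on whether every branch has nonempty intersection then produces either a surjection onto $\interval$ (all branches realized) or a countable partition of $X$ into nonempty clopen sets (the siblings $G_n$ along an empty branch), hence a surjection onto $\naturals$. Your telescoping identity $\bigcup_n G_n = X\setminus\bigcap_n F_{\tau\restriction n}$ and the nonemptiness of each $G_n$ (each being the clopen sibling $F_{(\tau\restriction n)^\frown(1-\tau(n))}$) do hold, so the partition claim is sound. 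What each approach buys: yours is elementary and self-contained, avoiding compactifications, pseudocompactness, retraction theorems for zero-dimensional subsets of $\reals$, and the surjection theorem onto $D(2)^\omega$; the paper's is shorter modulo those standard facts and exposes along the way where the alternatives live (a crowded space failing to map onto $\naturals$ is pseudocompact, and pseudocompactness is what forces the image to fill up the Cantor cube).
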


\begin{proof}
Assume that there is a continuous map $f\from X\to \reals$ such that $f(X)$ is not zero-dimensional. Then $f(X)$ contains a nontrivial closed interval $J$ on which $f(X)$ can be retracted.
Hence we may assume that for all continuous functions $f\from X\to \reals$, $f(X)$ is zero-dimensional. As a consequence, $X$ is zero-dimensional.

Assume that there is a continuous function $f\from X\to \reals$ such that $f(X)$ is not compact. Then $f(X)$ contains a closed copy of $\naturals$ on which $f(X)$, being zero-dimensional, can be retracted.
Hence we can assume that for all continuous functions $f\from X\to\reals$, $f(X)$ is compact; that is, $X$ is pseudocompact.

Assume that there is a continuous function $f\from X\to \reals$ such that $f(X)$ is uncountable. Then $f(X)$, being compact, contains a Cantor set $K$. And since $f(X)$ is zero-dimensional, it can be retracted onto $K$. Now it suffices to observe that $K$ can be mapped onto $\interval$.

Let $bX$ be a zero-dimensional compactification of $X$. Since $b X$ is crowded and zero-dimensional, there is a continuous surjection $f\from b X\to D(2)^\omega$. But then by pseudocompactness of $X$, $f(X) = D(2)^\omega$, and so we are done by what we just observed.
\end{proof}

\begin{corollary}
Every infinite crowded space can be mapped onto an infinite homogeneous space of countable weight.
\end{corollary}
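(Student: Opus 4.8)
The plan is to feed an infinite crowded space $X$ into the preceding Lemma and then post-compose the resulting surjection with a single quotient map in each of the two cases. By the Lemma, $X$ admits a continuous surjection onto $\naturals$ or onto $\interval$, and in either case I only need to exhibit a continuous surjection from the target onto an infinite homogeneous space of countable weight; composing the two maps then finishes the argument, since a composition of continuous surjections is again a continuous surjection.

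If $X$ maps onto $\naturals$, there is nothing further to do: as the countable discrete space, $\naturals$ is itself infinite, homogeneous (every bijection of a discrete space is a homeomorphism), and of weight $\omega$.

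If instead $X$ maps onto $\interval$, I would post-compose the surjection $f\from X\to\interval$ with the quotient map $q\from\interval\to\mbb{T}$ that identifies the two endpoints $0$ and $1$, where $\mbb{T}$ denotes the unit circle. This $q$ is a continuous surjection, and $\mbb{T}$, being a compact metrizable topological group, is infinite, homogeneous, and of countable weight. Thus $q\circ f\from X\to\mbb{T}$ is the desired map.

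I expect no genuine obstacle here, since all the substance is already absorbed into the Lemma; the corollary is essentially a routing argument. The only points to verify are the homogeneity of the two target spaces (immediate for a discrete space, and for $\mbb{T}$ because it is a topological group and hence homogeneous via its translations) together with the continuity and surjectivity of $q$, all of which are standard.
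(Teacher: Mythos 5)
Your proposal is correct and matches the paper's proof: the paper likewise collapses the two endpoints of $\interval$ to a point to obtain the circle, a homogeneous space of countable weight, and the $\naturals$ case is handled by the observation that a discrete space is already homogeneous. No issues.
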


\begin{proof}
Simply observe that by collapsing the two endpoints of $\interval$ to a single point, we obtain a homogeneous space.
\end{proof}

The lemma and corollary are not true if $X$ is not crowded. For let $X=W(\omega_1)$, the space of all countable ordinal numbers. Then every continuous image of $X$ of countable weight, is compact and countable. Hence has an isolated point. Hence if it is infinite, it is neither $\naturals$ nor $\interval$, and not homogeneous.

\section{Continuous images of uncountable weight}\label{derdeparagraaf}

We now formulate and prove the main result in this note.

\begin{theorem}\label{nulde}
There is a homogeneous space of weight $\cont$ such that if $f\from X\to Y$ is a continuous surjection and $\omega < \weight{Y} < \cont$, then $Y$ is not homogeneous.
\end{theorem}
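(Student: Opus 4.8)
The plan is to exhibit $X$ as a \emph{first countable}, homogeneous compactum of weight $\cont$, and then to show that homogeneity forces every continuous image of intermediate weight to be first countable -- which the construction of $X$ will forbid. A natural way to produce such an $X$ is to start from a first countable, zero-dimensional compactum $K$ of weight $\cont$ and set $X=K^{\omega}$; by the Dow--Pearl theorem $X$ is homogeneous, and being a countable power of a first countable compactum it is again compact and first countable of weight $\cont$. (The split circle, the order-theoretic doubling of $S^1$, is an alternative concrete candidate: it is homogeneous, first countable, separable, and of weight $\cont$.) The two facts I will use repeatedly are that $X$ is compact, so every image is compact, and that $t(X)=\omega$ and $|X|\le\cont$.

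Fix a continuous surjection $f\from X\to Y$ with $\omega<\weight{Y}<\cont$ and suppose, for contradiction, that $Y$ is homogeneous. First I would verify that tightness does not increase under continuous surjections of compacta: using \arhang's characterization of tightness in compacta by the supremum of the lengths of free sequences, a free sequence in $Y$ lifts along the closed map $f$ to a free sequence of the same length in $X$, so $t(Y)\le t(X)=\omega$. Next, a compactum of countable tightness has a point of countable character (\arhang--Shapirovskii); equivalently, $Y$ has a $G_\delta$ point. Since $Y$ is a homogeneous compactum, the character $\chi(y,Y)$ is the same at every point, and in a compactum a point is $G_\delta$ exactly when it has countable character; hence $Y$ is first countable.

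This reduces the theorem to the following purely topological statement about $X$: \emph{$X$ has no first countable continuous image of weight $\kappa$ with $\omega<\kappa<\cont$}; equivalently, every first countable image of $X$ is either metrizable or of full weight $\cont$. Granting this, the first countable homogeneous image $Y$ above would have weight $\le\omega$ or $=\cont$, contradicting $\omega<\weight{Y}<\cont$, so no such $Y$ can be homogeneous. Note that intermediate-weight images of $X$ do exist (one cannot make all small-weight images metrizable while keeping $\weight{X}=\cont$), but the argument above shows that each of them carries a $G_\delta$ point and, failing first countability, also a point of uncountable character, and is therefore inhomogeneous.

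The hard part is exactly the reduced statement, and this is where the construction of $K$ (or of the splitting set of the split circle) must do the work. A first countable compact image of weight $\kappa$ corresponds to a quotient determined by $\kappa$ coordinates of $X$, and the goal is to arrange the building block so that any such family of size strictly below $\cont$ already generates only a second countable quotient, so that partial information of size $<\cont$ can never produce a genuinely intermediate first countable weight. I would engineer this through a combinatorial property of the clopen algebra of $K$ (for instance an almost disjoint or independent family, or a gap structure, chosen so that every subalgebra of size $<\cont$ yielding a first countable quotient is in fact countably generated). Verifying that this property survives passage to $K^{\omega}$ and controls \emph{all} continuous images -- not merely the product-like ones, and in particular ruling out first countable images of dyadic or split type of intermediate weight -- is the main obstacle, and the place where the precise choice of $X$ is essential.
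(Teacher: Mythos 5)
Your opening step is fine (tightness does not increase under continuous surjections of compacta, by lifting free sequences), but the very next step is already a genuine gap: it is \emph{not} a theorem of \ZFC\ that a compactum of countable tightness has a point of countable character. Shapirovskii's theorem \cite{Sap} gives a dense set of points of countable \emph{$\pi$-character}, which is strictly weaker. In fact the statement you attribute to \arhang--Shapirovskii is consistently false: under $\diamondsuit$, Fedorchuk's hereditarily separable compactum without nontrivial convergent sequences is crowded and countably tight, yet has no $G_\delta$ points at all (a non-isolated $G_\delta$ point of a compactum has countable character and hence admits a nontrivial convergent sequence). Upgrading countable $\pi$-character to countable character using the homogeneity of $Y$ is precisely the well-known question, discussed in the survey \cite{ArhangMill2014} cited in this paper, of whether every homogeneous compactum of countable tightness is first countable; this is not a classical theorem one can invoke (de la Vega's bound $|Y| \le \cont$ is the best known \ZFC\ substitute). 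So your reduction ``homogeneous image of intermediate weight $\Rightarrow$ first countable'' is unsupported.

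The second, larger gap is the ``reduced statement'' itself, which you explicitly leave unproved: that $X$ has no first countable continuous image of weight strictly between $\omega$ and $\cont$. For both concrete candidates you offer this statement is in fact \emph{false}. The split circle (and likewise the double arrow space, the natural $K$ to feed into Dow--Pearl) maps onto the compactum obtained by splitting only the points of a prescribed set $S$ with $|S|=\omega_1$ and collapsing all other doubled pairs; this image is compact, first countable, and of weight $\omega_1$. So the plan cannot be completed with these building blocks, and no candidate $X$ with the required property is exhibited; producing one is not routine, since by Tkachenko's theorem (quoted in the introduction) any compactum of weight $\cont$ has continuous images of every regular intermediate weight, so first countability of all of them would have to be excluded by a construction you have not given. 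The paper sidesteps compacta entirely: its $X$ is a non-compact but \emph{almost compact} space---a Mr\'owka $\Psi$-space with each isolated point blown up to a Cantor set---which is homogeneous because it is locally homeomorphic to the Cantor set. Almost compactness passes to continuous images and makes them locally compact; a non-compact image of weight $<\cont$ is impossible because the point at infinity of $\beta X$ has character $\cont$; and a compact homogeneous image must be metrizable because every compact subset of $X$ is metrizable. In other words, it is non-compactness, not compactness, that makes the argument work, and this is the idea missing from your proposal.
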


Let $\mathcal{A}$ be a Mr\'owka family on $\omega$. That is, $\mathcal{A}$ is a MAD-family such that the \CS\ compactification of $\Psi(\mathcal{A})$, the $\Psi$-space $\omega\cup \mathcal{A}$ of $\mathcal{A}$, coincides with its 1-point compactification. That is, $\Psi(\mathcal{A})$ is \emph{almost-compact}. That such a family exists is well-known, Mr\'owka~\cite{Mrowka77} (see also~\cite[8.6.1]{HernandezHernandezHrusak18}). The family $\mathcal{A}$ has cardinality $\cont$ by construction, hence $\weight{\Psi(\mathcal{A})} = \cont$.

Now in $\Psi(\mathcal{A})$ we replace every point of $\omega$ by a copy of the Cantor set $K=D(2)^\omega$ to obtain a space $X$, as follows. The underlying set of $X$ is $(\omega\times K) \cup \mathcal{A}$. A basic neighborhood of a point $\orpr{n}{k}$, where $n\in \omega$ and $k\in K$, has the form $\{n\}\times C$, where $C$ is any open neighborhood of $k$ in $K$. And a basic neighborhood of $A\in \mathcal{A}$ in $X$ has for $n \in \omega$ the form $U_n(A)=\{A\}\cup \bigcup \{\{m\} \times K : m\in A, m\ge n\}$. It is clear that $X$ is zero-dimensional and locally homeomorphic to $K$, hence $X$ is homogeneous.

Let $f\from X\to \Psi(\mathcal{A})$ be the function that sends $A$ in $X$ to $A$ in $\Psi(\mathcal{A})$, for every $A\in \mathcal{A}$, and every $\{n\}\times K$ to $\{n\}$, for $n\in \omega$.

\begin{lemma}\label{eerste}
$f$ is perfect and open.
\end{lemma}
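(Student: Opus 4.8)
The plan is to verify directly the three defining properties of a perfect map—continuity, compactness of fibres, and closedness—together with openness, all of which flow from a single observation: that $f$ pulls each basic open set of $\Psi(\mathcal{A})$ back to a basic open set of $X$. Writing $V_n(A)=\{A\}\cup\{m\in A : m\ge n\}$ for the standard basic neighbourhood of $A$ in $\Psi(\mathcal{A})$, the key identities are
\[
f^{-1}(\{n\}) = \{n\}\times K \quad\text{and}\quad f^{-1}(V_n(A)) = U_n(A).
\]
Continuity is then immediate, since the isolated points $\{n\}$ together with the sets $V_n(A)$ form a base for $\Psi(\mathcal{A})$ and both preimages are open in $X$.

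For openness I would argue pointwise: given an open $U\con X$ and $x\in U$, I show $f(U)$ is a neighbourhood of $f(x)$. If $x=\orpr{n}{k}$ lies over $\omega$, then $f(x)=n$ is isolated in $\Psi(\mathcal{A})$, so $\{n\}\con f(U)$ already witnesses this. If $x=A\in\mathcal{A}$, then $U$ contains some $U_n(A)$, whence $f(U)\supseteq f(U_n(A))=V_n(A)$, again a neighbourhood of $f(x)=A$. Since $f(U)$ is a neighbourhood of each of its points, it is open.

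The fibres are compact: $f^{-1}(n)=\{n\}\times K$ is a copy of the Cantor set, while $f^{-1}(A)=\{A\}$ is a singleton. It remains to prove $f$ closed, the only step requiring care, since $f$ collapses whole Cantor fibres and is very far from injective. Let $F\con X$ be closed and $y\notin f(F)$. The case $y=n$ is trivial, as $\{n\}$ is a clopen neighbourhood of $y$ missing $f(F)$. If $y=A$, then $A\notin F$, so by closedness of $F$ some basic neighbourhood satisfies $U_n(A)\cap F=\empt$. Applying the general equivalence $f(F)\cap V\neq\empt \iff F\cap f^{-1}(V)\neq\empt$ to $V=V_n(A)$, and using $f^{-1}(V_n(A))=U_n(A)$, I conclude $V_n(A)\cap f(F)=\empt$. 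Hence $A\notin\CL{f(F)}$, so $f(F)$ is closed.

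The main obstacle, such as it is, lies in this closedness argument: one must exploit that the neighbourhoods $U_n(A)$ are \emph{saturated}—they contain the entire Cantor set at every relevant level $m\ge n$—so that pulling back a basic neighbourhood of $A$ returns exactly such a set and the non-injectivity of $f$ does no harm. Note that almost-compactness of $\Psi(\mathcal{A})$ plays no role here; only the combinatorial structure of these fat neighbourhoods matters. Once continuity, compactness of fibres, and closedness are in hand, $f$ is perfect by definition.
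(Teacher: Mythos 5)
Your proposal is correct and follows essentially the same route as the paper: the paper also treats continuity, openness and compactness of fibres as immediate, and proves closedness by taking a basic neighbourhood $U_n(A)$ disjoint from the closed set and using its saturation ($f^{-1}(f(U_n(A)))=U_n(A)$, which is your identity $f^{-1}(V_n(A))=U_n(A)$) to conclude that its image misses the image of the closed set. Your write-up merely makes explicit the routine verifications the paper declares clear.
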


\begin{proof}
It is clear that $f$ is continuous and has compact fibers. It is also clear that $f$ is open.
Hence it suffices to prove that $f$ is closed. To this end, let $E$ be any closed subset of $X$. We claim that that $\Psi(A)\setminus f(E)$ is open. If $p\in  (\Psi(\mathcal{A})\setminus f(E)) \cap \omega$, then $\{p\}$ is a neighborhood of $p$ that is contained in $\Psi(\mathcal{A})\setminus f(E)$. Now assume that $p = A\in \mathcal{A}$. Then $A$ in $X$ does not belong to $E$. Hence there exists $n$ such that $U_n(A) \cap E=\emptyset$. But $f^{-1}(f(U_n(A)) = U_n(A)$, hence $f(U_n(A)) \cap f(E) = \emptyset$. But $f(U_n(A))$ is a neighborhood of $A$ in $\Psi(\mathcal{A})$, hence we are done.
\end{proof}

\begin{lemma}\label{tweede}
If $Z$ is a zero-set $X$, then $f(Z)$ is a zero-set in $\Psi(\mathcal{A})$.
\end{lemma}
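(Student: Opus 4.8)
The plan is to produce, from a map realizing $Z$ as a zero-set in $X$, a map on $\Psi(\mathcal{A})$ realizing $f(Z)$ as a zero-set, by taking fiberwise minima. First I would fix a continuous $g\from X\to\interval$ with $Z=g^{-1}(0)$. Since $f$ is perfect by Lemma~\ref{eerste}, each fiber $f^{-1}(p)$ is compact, so $g$ attains a minimum on it, and I may set
\[
h(p)=\min\{g(x):x\in f^{-1}(p)\}\qquad(p\in\Psi(\mathcal{A})),
\]
obtaining a well-defined function $h\from\Psi(\mathcal{A})\to\interval$.

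Next I would verify that $h$ picks out $f(Z)$: since $g\ge 0$, we have $h(p)=0$ precisely when some $x\in f^{-1}(p)$ satisfies $g(x)=0$, i.e.\ precisely when $f^{-1}(p)$ meets $Z$, i.e.\ precisely when $p\in f(Z)$. Thus $h^{-1}(0)=f(Z)$, and it remains only to prove that $h$ is continuous.

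The continuity of $h$ is the main point, and here I would use crucially that $f$ is both open \emph{and} closed, both of which are supplied by Lemma~\ref{eerste}. I would check the two semicontinuities separately. For each real $c$ one has $\{p:h(p)<c\}=f(g^{-1}([0,c)))$, which is open because $g^{-1}([0,c))$ is open in $X$ and $f$ is an open map; and $\{p:h(p)>c\}=\Psi(\mathcal{A})\setminus f(g^{-1}([0,c]))$, which is open because $g^{-1}([0,c])$ is closed in $X$ and $f$ is a closed map. Hence $h$ is continuous, so $f(Z)=h^{-1}(0)$ is a zero-set in $\Psi(\mathcal{A})$. I do not expect any serious obstacle beyond being careful that both features of the perfect, open map $f$ are genuinely needed: closedness alone would give only lower semicontinuity of the fiberwise minimum, and openness alone only upper semicontinuity.
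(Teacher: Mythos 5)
Your proposal is correct, and it follows the same basic scheme as the paper: fix $\xi\from X\to\interval$ with $Z=\xi^{-1}(0)$, define the fiberwise minimum $\eta(p)=\min\xi(f^{-1}(p))$ (well defined by compactness of fibers from Lemma~\ref{eerste}), and check that $\eta^{-1}(0)=f(Z)$. Where you genuinely diverge is in the continuity verification. The paper argues via first-countability of $\Psi(\mathcal{A})$: it checks continuity at a point $A\in\mathcal{A}$ by testing convergent sequences of the form $\{m\in A: m\ge n\}$ and observing that the corresponding fibers converge to $A$ in $X$, so the minima converge. Your argument instead establishes the two semicontinuities directly: $\{p:\eta(p)<c\}=f(\xi^{-1}([0,c)))$ is open since $f$ is open, and $\{p:\eta(p)>c\}$ is the complement of $f(\xi^{-1}([0,c]))$, which is closed since $f$ is closed; both identities are correct because $\xi\ge 0$ and the fibers are compact. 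This buys you two things: it makes no use of first-countability of the target, so it proves the general fact that fiberwise minima along perfect open maps are continuous; and it is arguably tighter than the paper's sequence argument, which only inspects a cofinal family of ``typical'' sequences. The paper's route is shorter to state given the very concrete description of neighborhoods in $\Psi(\mathcal{A})$, but nothing is lost in yours.
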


\begin{proof}
Let $\xi\from X\to [0,1]$ be continuous such that $\xi^{-1}(0) = Z$. Define $\eta\from \Psi(\mathcal{A}) \to [0,1]$ as follows:
$\eta(p) = \min \xi( f^{-1}(p))$. It is clear that $\eta$ is well-defined since $f$ is perfect (Lemma~\ref{eerste}). To prove it is continuous, we only need to check that at some $A\in \mathcal{A}$. By first-countability, we can check that by considering convergent sequences. A typical sequence that converges to $A$ in $\Psi(\mathcal{A})$ has the form $C=\{m\in A : m \ge n\}$ for some $n$. By the definition of the topology on $X$, $\{f^{-1}(c) : c\in C\}$ converges to $A$ in $X$. Hence $\{\xi(f^{-1}(c)) : c\in C\}$ converges to $\eta(A) = \xi(A)$ in $[0,1]$. But then so do the minima of these compact sets.

We will shows that $\eta^{-1}(0) = f(Z)$, which does the job. Indeed, pick an arbitrary $p\in \eta^{-1}(0)$. Assume first that $p = A\in \mathcal{A}$. Then since $\eta(A) = \xi(A)$, $A\in Z$, hence $A=f(A)\in f(Z)$. Assume next that $ p = n\in \omega$. Then $f^{-1}(p) = \{n\}\times K$, hence there exists $k\in K$ such that $\xi(\orpr{n}{k})=0$. Hence $\orpr{n}{k}\in Z$, so that $n = f(\orpr{n}{k}) \in f(Z)$. From this we conclude that $\eta^{-1}(0) \con f(Z)$. For the reverse inclusion, take an arbitrary $z\in Z$, and consider $f(z)$. Again, there are two cases. Assume first that $f(z) = A\in \mathcal{A}$. Then $z= A$, hence $\eta(f(z)) = \xi(A)=0$. Assume next that $f(z) = n\in \omega$. Then $z\in \{n\}\times K$, hence $\min (\xi(\{n\}\times K)) = 0$. That is, $\eta(f(z))=\eta(n) = 0$.
\end{proof}

\begin{corollary}\label{derde}
$X$ is almost compact.
\end{corollary}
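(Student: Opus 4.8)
The plan is to verify $X$ is almost compact through the zero-set characterization: if, among any two disjoint zero-sets of a Tychonoff space, at least one is compact, then the space is almost compact. (Otherwise two distinct remainder points $p,q\in\beta X\setminus X$ could be separated by some $h\in C(\beta X,[0,1])$ with $h(p)=0$ and $h(q)=1$, and then $\{x\in X:h(x)\le\tfrac{1}{3}\}$ and $\{x\in X:h(x)\ge\tfrac{2}{3}\}$ would be disjoint zero-sets whose closures in $\beta X$ contain $p$ and $q$ respectively, so neither would be compact.) Two preliminary equivalences drive everything. First, since $f$ is perfect (Lemma~\ref{eerste}), preimages of compact sets are compact, so for closed $C\subseteq X$ one has $C$ compact iff $f(C)$ is compact. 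Second, because $\mathcal{A}$ is maximal, every infinite subset of $\omega$ meets some member of $\mathcal{A}$ in an infinite set and hence fails to be closed in $\Psi(\mathcal{A})$; consequently a closed set $W\subseteq\Psi(\mathcal{A})$ is compact iff $W\cap\mathcal{A}$ is finite. Combining the two, a zero-set $Z$ of $X$ is compact iff $f(Z)\cap\mathcal{A}$ is finite.

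Now let $Z_0,Z_1$ be disjoint zero-sets of $X$; I must produce a compact one. By Lemma~\ref{tweede}, $f(Z_0)$ and $f(Z_1)$ are zero-sets of $\Psi(\mathcal{A})$, and I write $S_i=f(Z_i)\cap\mathcal{A}$. Since $f$ is injective on $\mathcal{A}$ with $f^{-1}(A)=\{A\}$, any $A\in S_0\cap S_1$ would force $A\in Z_0\cap Z_1=\emptyset$; hence $S_0\cap S_1=\emptyset$, and the overlap $D:=f(Z_0)\cap f(Z_1)$ lies inside $\omega$. The key observation is that $D$ is in fact \emph{finite}. Indeed, if some $A\in\mathcal{A}$ met $D$ in an infinite set, then $A$ would be a limit point of the closed sets $f(Z_0)$ and $f(Z_1)$, forcing $A\in S_0\cap S_1=\emptyset$, a contradiction; thus $D$ is almost disjoint from every member of $\mathcal{A}$, and maximality of $\mathcal{A}$ leaves no room for $D$ to be infinite.

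Finiteness of $D$ permits disjointification: $D$ is a finite set of isolated points, hence clopen, so $\Psi(\mathcal{A})\setminus D$ is clopen and therefore a zero-set, whence $W_1:=f(Z_1)\cap(\Psi(\mathcal{A})\setminus D)$ is a zero-set disjoint from $f(Z_0)$ with $W_1\cap\mathcal{A}=S_1$. Applying almost-compactness of $\Psi(\mathcal{A})$ to the disjoint zero-sets $f(Z_0)$ and $W_1$, one of them is compact; by the trace criterion this means $S_0$ or $S_1$ is finite, and by the equivalence of the first paragraph the corresponding $Z_i$ is compact. This is exactly what the zero-set characterization demands, so $X$ is almost compact.

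The step I expect to be the crux is the finiteness of the overlap $D$: it is precisely here that maximality of the Mr\'owka family enters, and it is what turns the non-disjoint images $f(Z_0),f(Z_1)$ into a genuinely disjoint pair to which the almost-compactness of $\Psi(\mathcal{A})$ can be applied. The remaining ingredients—identifying compactness with finiteness of the $\mathcal{A}$-trace and the clopen disjointification—are routine consequences of Lemmas~\ref{eerste} and \ref{tweede}.
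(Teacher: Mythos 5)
Your proof is correct and follows essentially the same route as the paper: both rest on the Gillman--Jerison zero-set criterion, Lemma~\ref{tweede}, the maximality of $\mathcal{A}$, and the almost compactness of $\Psi(\mathcal{A})$. The only difference is cosmetic: where you disjointify $f(Z_0)$ and $f(Z_1)$ by excising the finite overlap $D\subseteq\omega$, the paper instead intersects the $Z_i$ with the zero-set $\mathcal{A}$ (after first disposing of the case where some $Z_i\cap\mathcal{A}$ is finite), arriving at the same pair of disjoint noncompact zero-sets of $\Psi(\mathcal{A})$.
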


\begin{proof}
By Gillman and Jerison~\cite[6J]{gj} we must show that of any two disjoint zero-sets in $X$, at least one of them is compact. Hence, assume that $Z_0$ and $Z_1$ are disjoint zero-sets in $X$ such that neither $Z_0$ nor $Z_1$ is compact.

Assume that $Z_0\cap \mathcal{A}$ is finite. There is a clopen compact $C$ in $X$ that contains $Z_0\cap \mathcal{A}$. Hence $S_0 = Z_0\setminus C$ is a noncompact zero-set in $X$ that misses $\mathcal{A}$. But then $f(S_0)$ is by Lemma~\ref{eerste} an infinite closed subset of $\Psi(\mathcal{A})$ that misses $\mathcal{A}$. But every infinite closed subset of $\Psi(\mathcal{A})$ intersects $\mathcal{A}$ since $\mathcal{A}$ is MAD.

We may consequently assume without loss of generality that $Z_0\cap \mathcal{A}$ and $Z_1\cap \mathcal{A}$ are both infinite. But they are clearly zero-sets of $X$. But then by Lemma~\ref{tweede}, $\mathcal{A}$ contains two disjoint infinite zero-sets of $\Psi(\mathcal{A})$, which contradicts $\Psi(\mathcal{A})$ being almost compact.
\end{proof}

Write $\beta \Psi(\mathcal{A}) = \Psi(\mathcal{A}) \cup \{\infty\}$.

We are now in a position to present the proof of Theorem~\ref{nulde}. To this end, let $g\from X\to Y$ be continuous, where $\omega < \weight{Y} < \cont$ and $Y$ is homogeneous. We will show that this leads to a contradiction.

The space $Y$ is almost compact (hence locally compact) by Gillman and Jerison~\cite[6J]{gj}.

Write $\beta X = X\cup \{\infty_X\}$, $\beta Y = Y \cup \{\infty_Y\}$, and let $h=\beta g\from \beta X\to \beta Y$ be the Stone extension of $g$.

\begin{lemma}\label{vierde}
$Y$ is compact.
\end{lemma}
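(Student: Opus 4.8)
The plan is to argue by contradiction, reducing the statement to a comparison of the characters of the two remainder points $\infty_X$ and $\infty_Y$. So suppose $Y$ is \emph{not} compact. Since $Y$ is almost compact, $\beta Y\setminus Y$ is the single point $\infty_Y$; as $\{\infty_Y\}$ is closed, $Y$ is open in the compact space $\beta Y$, so $\beta Y$ is the one-point compactification of the locally compact, non-compact space $Y$. I will show that the existence of $g$ then forces $\chi(\infty_X,\beta X)<\cont$, contradicting a direct computation that $\chi(\infty_X,\beta X)=\cont$.

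For the upper bound, first note $\chi(\infty_Y,\beta Y)\le \weight{Y}<\cont$: using local compactness of $Y$, fix a base of size $\weight{Y}$ consisting of open sets with compact closures; the finite unions of these closures form a family of compact sets that is cofinal under inclusion and has cardinality $\weight{Y}$, and the complements of its members form a neighborhood base at $\infty_Y$. Next I locate $\infty_X$ in the fibers of $h=\beta g$. Since $h$ restricts to the surjection $g$ on $X$ and $\infty_Y\notin Y$, no point of $X$ maps to $\infty_Y$; as $h(\beta X)$ is a compact set containing the dense subset $Y$, we have $h(\beta X)=\beta Y$, so $h$ is onto and hence $h^{-1}(\infty_Y)=\{\infty_X\}$. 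Being a continuous map from a compact space to a Hausdorff space, $h$ is closed, and for a closed surjection with the single-point fiber $h^{-1}(\infty_Y)=\{\infty_X\}$ the sets $h^{-1}(W)$, with $W$ ranging over a neighborhood base at $\infty_Y$, form a neighborhood base at $\infty_X$. Therefore $\chi(\infty_X,\beta X)\le \chi(\infty_Y,\beta Y)\le \weight{Y}<\cont$.

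It remains to contradict this by computing $\chi(\infty_X,\beta X)=\cont$ directly. The key observation is that $\mathcal{A}$ sits inside $X$ as a closed discrete subspace of cardinality $\cont$: distinct basic neighborhoods $U_n(A)$ separate the points of $\mathcal{A}$, and $\omega\times K$ is open. Every compact subset of $X$ meets the closed discrete set $\mathcal{A}$ in a finite set, while for each finite $F\con\mathcal{A}$ the set $\bigcup_{A\in F}\big(\{A\}\cup\bigcup_{m\in A}(\{m\}\times K)\big)$ is compact and meets $\mathcal{A}$ exactly in $F$. Consequently $\CL{\mathcal{A}}=\mathcal{A}\cup\{\infty_X\}$ in $\beta X$, and the traces on it of the neighborhoods of $\infty_X$ are precisely the cofinite subsets; thus $\CL{\mathcal{A}}$ is the one-point compactification of a discrete space of size $\cont$, whence $\chi(\infty_X,\beta X)\ge\chi(\infty_X,\CL{\mathcal{A}})=\cont$. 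This contradicts the previous paragraph, so $Y$ must be compact.

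I expect the last step to be the crux: pinning down the character of $\infty_X$ \emph{from below} by exhibiting the closed discrete copy of $\mathcal{A}$ of full cardinality $\cont$ and verifying that its trace on the neighborhood filter of $\infty_X$ is exactly the cofinite filter. By contrast, the transfer of character across $h$ is routine once the single-point fiber $h^{-1}(\infty_Y)=\{\infty_X\}$ is identified, and the bound $\chi(\infty_Y,\beta Y)\le\weight{Y}$ is a standard consequence of local compactness. Note that homogeneity of $Y$ is not needed here; only $\weight{Y}<\cont$ and the almost compactness of $Y$ are used, so this lemma isolates the weight obstruction and leaves homogeneity to be exploited afterwards.
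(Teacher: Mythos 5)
Your proof is correct and follows essentially the same route as the paper: assume $Y$ non-compact, identify $h^{-1}(\infty_Y)=\{\infty_X\}$, bound $\chi(\infty_X,\beta X)$ above by $\weight{Y}<\cont$ via the closed map $h$, and contradict the fact that this character is $\cont$. The only (cosmetic) difference is that you compute $\chi(\infty_X,\beta X)=\cont$ directly from the closed discrete copy of $\mathcal{A}$ in $X$, whereas the paper transfers the bound to the point $\infty$ of $\beta\Psi(\mathcal{A})$ and leaves that same computation implicit.
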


\begin{proof}
Assume that $Y$ is not compact. Then $h(\infty_X) = \infty_Y$, and since $h(X)=Y$, we get $h^{-1}(\{\infty_Y\}) = \{\infty_X\}$. Since $Y$ is locally compact, $\weight{\beta Y} < \cont$ from which it follows that the character of $\infty_X$ in $X$ is less than $\cont$. But then the character of $\infty$ in $\beta\Psi(\mathcal{A})$ is less than $\cont$, which is a contradiction.
\end{proof}

Put $y_0 = h(\infty_X)$. Take an arbitrary $y_1\in X\setminus \{y_0\}$. Let $U$ be a compact neighborhood of $y_1$ in $Y$ that misses $y_0$. Then $h^{-1}(U)$ is a compact subset of $X$ and hence is of countable weight. But then $U$ is of countable weight. By homogeneity, every point of $Y$ has a compact neighborhood of countable weight. But then by compactness, $Y$ has countable weight, which is a contradiction.

\section{On locally $\mathfrak{c}$ and $\mathfrak{c}$-fair spaces}

In the previous section, we constructed an almost-compact first-countable and homogeneous space of weight and cardinality $\cont$. By \arhang's celebrated
result from~\cite{arh:c1bicompacta}, every first-countable Lindel\"of space has cardinality at most $\cont$. In the light of this it is natural
to wonder whether there is a bound on the cardinality of first-countable almost-compact spaces. Our following general results answer this, but we
think they are of independent interest in themselves.

For the spaces in this section, unless otherwise stated, no separation axiom is assumed.

\begin{definition}
A topological space $X$ is
\begin{enumerate}[(i)]
  \item {\em locally $\mathfrak{c}$} if every point in $X$ has a neighborhood of cardinality $\le \,\mathfrak{c}$;

  \item {\em $\mathfrak{c}$-fair} if the closure of every subset of $X$ of cardinality $\mathfrak{c}$ also has cardinality $\mathfrak{c}$.

  \item We call a $\kappa$-sequence $\{F_\alpha : \alpha < \kappa\}$ of subsets of $X$ {\em strongly increasing}
  if $F_\alpha \subset\int F_{\alpha+1}$ for all $\alpha < \kappa$.
\end{enumerate}
\end{definition}

We now present the main result of this section.

\begin{theorem}\label{tm:lcf}
(i) Assume that the space $X$ is both locally $\mathfrak{c}$ and $\mathfrak{c}$-fair,
moreover $\kappa \le \mathfrak{c}$ is a regular cardinal such that for every strongly increasing
$\kappa$-sequence $\{F_\alpha : \alpha < \kappa\}$ of closed subsets of $X$ their union
$\bigcup \{F_\alpha : \alpha < \kappa\}$ is closed in $X$. Then $|X| > \mathfrak{c}$ implies
that for every subset $A$ of $X$ with $|A| \le \mathfrak{c}$
there exists a {\em clopen} subset $U$ of $X$ with $A \subset U$ and $|U| = \mathfrak{c}$ such that
$L(U) \ge \kappa$.

\smallskip

(ii) If  $X$ is both locally $\mathfrak{c}$ and $\mathfrak{c}$-fair,
moreover we have $t(x, X) < cf(\mathfrak{c})$ for all points $x \in X$, then $|X| > \mathfrak{c}$ implies the existence of
a clopen subset $U$ of $X$ with $|U| = L(U) = \mathfrak{c}$.
\end{theorem}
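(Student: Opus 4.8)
For part (i), the plan is to build, by transfinite recursion of length $\kappa$, a strongly increasing sequence $\{F_\alpha : \alpha < \kappa\}$ of closed sets, each of cardinality $\le \cont$, whose union is the desired $U$. First I would enlarge $A$ to a set $A'$ with $A \subseteq A'$ and $|A'| = \cont$ (possible since $|X| > \cont$) and set $F_0 = \CL{A'}$; by $\cont$-fairness $|F_0| = \cont$. At a successor step, given a closed $F_\alpha$ with $|F_\alpha| \le \cont$, I use local $\cont$-ness to choose for each $x \in F_\alpha$ an open neighbourhood $N_x$ with $|N_x| \le \cont$; then $G_\alpha = \bigcup_{x \in F_\alpha} N_x$ is open, contains $F_\alpha$, and has cardinality $\le \cont$. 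Choosing a point $p_\alpha \in X \setminus F_\alpha$ (available because $|F_\alpha| \le \cont < |X|$), I set $F_{\alpha+1} = \CL{G_\alpha \cup \{p_\alpha\}}$. Then $\cont$-fairness gives $|F_{\alpha+1}| \le \cont$, while $F_\alpha \subseteq G_\alpha \subseteq \int F_{\alpha+1}$ secures the strong-increase condition, and $p_\alpha$ makes the sequence strictly increasing. At a limit $\lambda$ I put $F_\lambda = \CL{\bigcup_{\alpha<\lambda} F_\alpha}$, whose cardinality is $\le \cont$ again by fairness.

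Then $U = \bigcup_{\alpha<\kappa} F_\alpha$. Since $x \in F_\alpha \subseteq \int F_{\alpha+1}$ for every $x \in U$, we have $U = \bigcup_\alpha \int F_{\alpha+1}$, so $U$ is open; by the closedness hypothesis on strongly increasing unions $U$ is also closed, hence clopen. Padding $A$ ensures $|U| = \cont$ and $A \subseteq U$. For the Lindel\"of bound I would use the canonical open cover $\{\int F_{\alpha+1} : \alpha < \kappa\}$ of $U$: if a subfamily indexed by $I \subseteq \kappa$ with $|I| < \kappa$ covered $U$, then by regularity of $\kappa$ the ordinal $\gamma = \sup I$ is $<\kappa$, and the subfamily would lie inside $F_{\gamma+1}$; but $p_{\gamma+1} \in U \setminus F_{\gamma+1}$, a contradiction. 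Hence $L(U) \ge \kappa$. The main technical points here are arranging the strong-increase (interior) condition at every successor, which is exactly what local $\cont$-ness buys, and keeping all cardinalities at $\cont$ through limits, which is exactly what $\cont$-fairness buys.

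For part (ii) the key observation is that the tightness hypothesis yields the closedness assumption of part (i) for every regular $\mu$ with $cf(\cont) \le \mu \le \cont$: given a strongly increasing $\mu$-sequence $\{F_\alpha\}$ of closed sets and a point $x \in \CL{\bigcup_\alpha F_\alpha}$, tightness provides $S \subseteq \bigcup_\alpha F_\alpha$ with $x \in \CL{S}$ and $|S| \le t(x,X) < cf(\cont) \le \mu$; since $\mu$ is regular and $\{F_\alpha\}$ increasing, $S$ lies in some $F_\gamma$, whence $x \in F_\gamma$ and the union is closed. If $\cont$ is regular, then $cf(\cont) = \cont$ and part (i) applied with $\kappa = \cont$ (and $A$ a singleton) produces a clopen $U$ with $|U| = \cont$ and $L(U) \ge \cont$; as always $L(U) \le |U|$, so $L(U) = \cont$.

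The remaining, and genuinely harder, case is $\cont$ singular, where part (i) directly gives only $L(U) \ge cf(\cont) < \cont$. Here I would first fix regular cardinals $\mu_\alpha$ ($\alpha < cf(\cont)$) that are cofinal in $\cont$ with $cf(\cont) \le \mu_\alpha < \cont$, and apply part (i) to $X$ itself for each $\mu_\alpha$ to obtain clopen-in-$X$ sets $V_\alpha$ with $|V_\alpha| = \cont$ and $L(V_\alpha) \ge \mu_\alpha$. Then I would run the recursion of the first paragraph with length $cf(\cont)$, but at the successor step take $F_{\alpha+1} = \CL{G_\alpha \cup V_\alpha}$ so that $V_\alpha \subseteq F_{\alpha+1}$; the union $U = \bigcup_{\alpha<cf(\cont)} F_\alpha$ is again clopen (now using the closedness hypothesis for $\kappa = cf(\cont)$, valid by the previous paragraph) and has cardinality $\cont$. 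Since each $V_\alpha$ is clopen in $X$ and contained in $U$, it is a closed subspace of $U$, so $L(U) \ge L(V_\alpha) \ge \mu_\alpha$; taking the supremum over $\alpha$ gives $L(U) \ge \cont$, and hence $L(U) = \cont$. The crux of (ii) is thus this boosting step: converting the many clopen pieces of cofinally large Lindel\"of degree into a single clopen set whose Lindel\"of degree reaches $\cont$, which works because the Lindel\"of degree of a closed subspace never exceeds that of the ambient space.
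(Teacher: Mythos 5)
Your proof is correct and follows essentially the same route as the paper: a transfinite recursion in which local $\mathfrak{c}$-ness supplies the interior condition and $\mathfrak{c}$-fairness controls cardinalities, with the strictly increasing open cover witnessing $L(U)\ge\kappa$ by regularity, and, for singular $\mathfrak{c}$, a gluing of clopen pieces of cofinally large Lindel\"of number. The only (immaterial) difference is in the singular case: the paper obtains an increasing chain of clopen sets directly by feeding $\bigcup_{\eta<\xi}U_\eta$ back into part (i) as the set $A$, whereas you build the pieces $V_\alpha$ independently and absorb them via a second recursion; your explicit verification that the tightness hypothesis yields the closedness assumption for the relevant regular cardinals is a point the paper leaves implicit.
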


\begin{proof}
(i) It is straightforward from $|X| > \mathfrak{c}$ and $X$ being locally $\mathfrak{c}$ and $\mathfrak{c}$-fair, that we may define
by transfinite recursion a strongly increasing
$\kappa$-sequence $\{F_\alpha : \alpha < \kappa\}$ of closed subsets of $X$ such that $F_0 =\overline{A}$ and
$\{\int F_\alpha : \alpha < \kappa\}$ is {\em strictly} increasing, moreover $|F_\alpha| = \mathfrak{c}$
for all $\alpha < \kappa$. It is clear that then
$$
    U = \bigcup \{F_\alpha : \alpha < \kappa\} = \bigcup \{\int F_\alpha : \alpha < \kappa\}.
$$
Hence, by our assumption, $U$ is clopen with $A \subset U$ and $|U| = \mathfrak{c}$,
moreover the strictly increasing open cover $\{\int F_\alpha : \alpha < \kappa\}$ of $U$ witnesses
$L(U) \ge \kappa$ because $\kappa$ is regular.

\smallskip

(ii) If $\mathfrak{c}$ is regular then we may just use part (i) for $\kappa = \mathfrak{c}$ to get 
the clopen $U$ with $|U| = L(U) = \mathfrak{c}$  because by our assumption the union of every increasing
$\mathfrak{c}$-sequence of closed subsets of $X$ is closed.

If, however, $\mathfrak{c}$ is singular then we first fix a sequence $\{\kappa_\xi : \xi < cf(\mathfrak{c})\}$
of regular cardinals $\kappa_\xi \ge cf(\mathfrak{\mathfrak{c}})$ that converges to $\mathfrak{c}$. Then we may repeatedly apply part (i) to obtain
an increasing sequence $\{U_\xi : \xi < cf(\mathfrak{c})\}$ of clopen sets in $X$ such that $|U_\xi| = \mathfrak{c}$
and $L(U_\xi) \ge \kappa_\xi$ for all $\xi < cf(\mathfrak{c})\}$. In fact, what we do to get $U_\xi$
given $\{U_\eta : \eta < \xi\}$, is using part (i) with the choice $A = \bigcup \{U_\eta : \eta < \xi\}$.

Then, using again that $t(x, X) < cf(\mathfrak{c})$ for all $x \in X$, the set $U = \bigcup \{U_\xi : \xi < cf(\mathfrak{c})\}$ is
clopen with $|U| = \mathfrak{c}$, moreover we have $L(U) \ge L(U_\xi) \ge \kappa_\xi$  for all $\xi < cf(\mathfrak{c})\}$,
hence $L(U) = \mathfrak{c}$.
\end{proof}

We note that by $|X \setminus U| = |X| > \mathfrak{c}$ and $X$ being locally $\mathfrak{c}$,
we trivially have $L(X \setminus U) = |X| > \mathfrak{c}$.

\begin{corollary}\label{co:pst}
Assume that $X$ is a {\em locally Lindelöf} regular space with $\psi(X) = t(X) = \omega$ and $|X| > \mathfrak{c}$.
Then there is a clopen subset $U$ of $X$ with $|U| = L(U) = \mathfrak{c}$.
\end{corollary}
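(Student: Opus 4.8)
The plan is to deduce the corollary from Theorem~\ref{tm:lcf}(ii): I would verify that $X$ is locally $\cont$, that $X$ is $\cont$-fair, and that $t(x,X) < cf(\cont)$ for every $x \in X$, and then invoke part (ii) together with the standing hypothesis $|X| > \cont$. Two of the three conditions are cheap. The tightness condition is automatic, since $t(x,X) \le t(X) = \omega$ while $cf(\cont) > \omega$ (because $\cont = 2^\omega$ has uncountable cofinality by K\"onig's theorem).

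For local $\cont$-ness, I would first note that in a regular locally Lindel\"of space every point $x$ has an open neighborhood $W$ with $\overline W$ Lindel\"of: shrink a Lindel\"of neighborhood of $x$ by regularity to an open $W$ with $\overline W$ contained in it, and use that a closed subspace of a Lindel\"of space is Lindel\"of. The closed neighborhood $N = \overline W$ is then a Lindel\"of regular space, and since tightness and pseudocharacter are monotone with respect to subspaces, $t(N) \le \omega$ and $\psi(N) \le \omega$. The \arhang--Shapirovskii inequality $|N| \le 2^{L(N)\, t(N)\, \psi(N)} = \cont$ then exhibits a neighborhood of $x$ of size at most $\cont$.

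The heart of the matter is $\cont$-fairness. Given $A \subseteq X$ with $|A| = \cont$, countable tightness yields $\overline A = \bigcup\{\overline C : C \in [A]^{\le\omega}\}$, a union of $\cont^\omega = \cont$ pieces; hence it suffices to show that $|\overline C| \le \cont$ for every countable $C$. For this I would prove the following self-contained bound: in a regular $T_1$ space with $\psi = \omega$, the closure of a countable set $C$ has cardinality at most $\cont$. Fix $y \in \overline C$; using $\psi = \omega$ and regularity, choose open sets $\{V^y_n : n \in \omega\}$ with $y \in V^y_n$ and $\bigcap_n \overline{V^y_n} = \{y\}$. Since $C$ is dense in $\overline C$, each nonempty relatively open set $V^y_n \cap \overline C$ lies in $\overline{C \cap V^y_n}$, so $y \in \overline{C \cap V^y_n}$ for all $n$, whence $\bigcap_n \overline{C \cap V^y_n} = \{y\}$. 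Therefore $y \mapsto (C \cap V^y_n)_{n\in\omega}$ is an injection of $\overline C$ into $(\mathcal{P}(C))^\omega$, a set of size $\cont$, and the bound follows.

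The step I expect to be the main obstacle is precisely this last bound, and the reason is instructive: one cannot obtain it by applying \arhang's theorem to $\overline C$ directly, because the closure of a countable set need not be Lindel\"of --- the space $\Psi(\mathcal{A})$ of the previous section is the closure of the countable dense set $\omega$ yet fails to be Lindel\"of. The pseudocharacter encoding above is what replaces the unavailable Lindel\"of bound, and the delicate point within it is the use of the density of $C$ to force $y$ into every $\overline{C \cap V^y_n}$, so that the recovered intersection collapses to exactly $\{y\}$ and the map is genuinely injective. Once the three hypotheses are in hand, Theorem~\ref{tm:lcf}(ii) delivers the required clopen $U$ with $|U| = L(U) = \cont$.
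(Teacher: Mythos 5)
Your proposal is correct and its skeleton is the same as the paper's: check that $X$ is locally $\mathfrak{c}$, that it is $\mathfrak{c}$-fair, and that $t(x,X)<cf(\mathfrak{c})$ at every point, then quote Theorem~\ref{tm:lcf}(ii). The local $\mathfrak{c}$-ness step (the \arhang--Shapirovskii bound applied to a closed Lindel\"of neighbourhood obtained by regularity) and the reduction of $\mathfrak{c}$-fairness to bounding $|\overline{C}|$ for countable $C$ via $t(X)=\omega$ are exactly what the paper does. Where you genuinely differ is in that last bound: the paper observes that $L(\overline{C})\le w(\overline{C})\le 2^{d(\overline{C})}=\mathfrak{c}$ by regularity and then covers $\overline{C}$ by at most $\mathfrak{c}$ neighbourhoods of size at most $\mathfrak{c}$, thereby reusing the already-established local $\mathfrak{c}$-ness; you instead run the standard closed-pseudocharacter injection $y\mapsto (C\cap V^y_n)_{n\in\omega}$ into $\mathcal{P}(C)^\omega$, i.e.\ the inequality $|Z|\le 2^{d(Z)\,\psi_c(Z)}$ for regular (indeed Hausdorff) spaces. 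Both arguments are correct and of comparable length; yours is self-contained and never needs the Lindel\"of degree of $\overline{C}$, while the paper's recycles machinery it already has on hand. Your side remark that $\overline{C}$ need not be Lindel\"of --- witnessed by $\Psi(\mathcal{A})$ as the closure of $\omega$ --- correctly identifies why neither proof can simply apply \arhang's theorem to $\overline{C}$ itself.
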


\begin{proof}
It follows from Shapirovskii's strengthening of \arhang's theorem in \cite{Sap}, see also 2.27 of \cite{J},
that $X$ is locally $\mathfrak{c}$.

Since we have $t(X) = \omega$, the $\mathfrak{c}$-fair property of $X$ follows if we can show that $|\overline{S}| \le \mathfrak{c}$ whenever $S$ is
any countable subset of $X$. But for such a set $S$  we have $L(\overline{S}) \le w(X) \le \mathfrak{c}$ by the regularity of $X$,
and then $|\overline{S}| \le \mathfrak{c}$ follows since we already know that $X$ is locally $\mathfrak{c}$.

Finally, the assumption $t(X) = \omega < cf(\mathfrak{c})$ implies that we may apply part (ii) of Theorem \ref{tm:lcf}
to conclude that there is a clopen subset $U$ of $X$ with $|U| = L(U) = \mathfrak{c}$.
\end{proof}

Now, if $X$ is almost compact and first countable then it is locally compact, $\psi(X) = t(X) = \omega$, and
for every clopen $U \subset X$ either $U$ or $X \setminus U$ is compact, hence it is immediate from Corollary
\ref{co:pst} that $|X| \le \mathfrak{c}$. This shows that \arhang's theorem may be extended to almost compact spaces,
giving the answer to our motivating question.

If, in addition to the assumptions of Corollary \ref{co:pst}, $X$ is also connected then $|X| > \mathfrak{c}$
would lead to a contradiction, hence we get that any locally Lindelöf and connected regular space $X$ with $\psi(X) = t(X) = \omega$
has cardinality $\le \mathfrak{c}$.

We note that the assumption of regularity of $X$ in the last statement, and hence in
Corollary \ref{co:pst} as well, cannot be weakened to
the Hausdorff property. Indeed, by Corollary 2.6 of \cite{JSSz} there is a locally countable anti-Urysohn
space $X$ with $|X| = 2^\mathfrak{c}$, and any anti-Urysohn
space is connected in a strong sense. (We recall that a Hausdorff space is anti-Urysohn if any two non-empty
regular closed sets in it intersect.) But any locally countable space clearly satisfies all the other
assumptions of Corollary \ref{co:pst}.

In contrast to this, the following corollary of Theorem \ref{tm:lcf} for connected spaces requires only the Hausdorff property.

\begin{corollary}\label{co:conseq}
If $X$ is any connected, locally $\mathfrak{c}$ and sequential Hausdorff space, then $|X| \le \mathfrak{c}$.
\end{corollary}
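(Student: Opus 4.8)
The plan is to deduce this directly from part (ii) of Theorem~\ref{tm:lcf}, so I must verify its three hypotheses for $X$: that $X$ is locally $\mathfrak{c}$, that $X$ is $\mathfrak{c}$-fair, and that $t(x,X) < cf(\mathfrak{c})$ for every $x \in X$. The first is part of the assumption. For the tightness condition I would invoke the standard fact that every sequential space has countable tightness, so that $t(x,X) \le \omega$; since König's theorem gives $cf(\mathfrak{c}) > \omega$, we obtain $t(x,X) \le \omega < cf(\mathfrak{c})$ as needed.

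The \emph{main obstacle} is verifying that $X$ is $\mathfrak{c}$-fair, and this is where I expect the real work to be. Given $S \subseteq X$ with $|S| = \mathfrak{c}$, I would compute $\overline{S}$ by the usual transfinite iteration of the sequential closure operator: put $S_0 = S$, let $S_{\alpha+1}$ be the union of $S_\alpha$ with the set of all limits of sequences lying in $S_\alpha$, take unions at limit stages, and use sequentiality of $X$ to conclude $\overline{S} = S_{\omega_1}$. The key observation is that, because $X$ is Hausdorff, every convergent sequence has a unique limit, so the number of points added at a successor step is at most the number of $\omega$-sequences in $S_\alpha$, that is $|S_\alpha|^{\omega}$. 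An induction using $\mathfrak{c}^{\omega} = \mathfrak{c}$ then keeps $|S_\alpha| \le \mathfrak{c}$ at every stage, and since $\omega_1 \le \mathfrak{c}$ gives $|\overline{S}| \le \omega_1 \cdot \mathfrak{c} = \mathfrak{c}$, while trivially $|\overline{S}| \ge |S| = \mathfrak{c}$, we conclude $|\overline{S}| = \mathfrak{c}$. Hence $X$ is $\mathfrak{c}$-fair.

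With all three hypotheses in hand, I would finish by contradiction. Suppose $|X| > \mathfrak{c}$. Then Theorem~\ref{tm:lcf}(ii) yields a clopen subset $U$ of $X$ with $|U| = \mathfrak{c}$. But $X$ is connected, so its only clopen subsets are $\emptyset$ and $X$; as $|U| = \mathfrak{c} \neq 0$, we must have $U = X$, whence $|X| = |U| = \mathfrak{c}$, contradicting $|X| > \mathfrak{c}$. Therefore $|X| \le \mathfrak{c}$, completing the proof.
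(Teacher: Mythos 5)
Your proposal is correct and follows essentially the same route as the paper: verify the hypotheses of Theorem~\ref{tm:lcf}, extract a clopen set $U$ with $|U|=\mathfrak{c}$ from the assumption $|X|>\mathfrak{c}$, and contradict connectedness. The only cosmetic differences are that you invoke part~(ii) where the paper uses part~(i) with $\kappa=\omega_1$ (via the observation that countable tightness makes increasing $\omega_1$-unions of closed sets closed), and that you prove the $\mathfrak{c}$-fairness directly by iterating the sequential closure $\omega_1$ times instead of citing the two standard facts (countable tightness of sequential spaces and $|\overline{S}|\le\mathfrak{c}$ for countable $S$) that the paper combines; both verifications are sound.
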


\begin{proof}
Assume, on the contrary, that $|X| > \mathfrak{c}$.
It is well-known that sequential spaces have countable tightness, moreover the closure of any countable set in a sequential Hausdorff space
has cardinality $\le \mathfrak{c}$, and these
clearly imply that $X$ is $\mathfrak{c}$-fair. Moreover, $t(X) = \omega$ also implies that
the union of every increasing $\omega_1$-sequence of closed sets in $X$ is closed. Thus we may apply Theorem \ref{tm:lcf}
to obtain a clopen subset $U$ of $X$ with $|U| = \mathfrak{c}$. But as $X$ is connected, then we would have $X = U$,  contradicting $|X| > \mathfrak{c}$.
\end{proof}

\def\cprime{$'$}
\makeatletter \renewcommand{\@biblabel}[1]{\hfill[#1]}\makeatother


\begin{thebibliography}{20}

\bibitem{Anderson58a}
R.~D. Anderson, {\em The algebraic simplicity of certain groups of
  homeomorphisms}, Amer. J. Math. {\bf 80} (1958), 955--963.

\bibitem{arh:c1bicompacta}
A.~V. \arhang, {\em On the cardinality of bicompacta satisfying the first axiom
  of countability}, Sov. Math. Dokl. {\bf 10} (1969), 951--955.

\bibitem{ArhangMill2014}
A.~V. Arhangel'skii and J.~van Mill, {\em Topological homogeneity}, Recent
  progress in general topology. {III}, Atlantis Press, Paris, 2014, pp.~1--68.

\bibitem{engelking:gentop}
R.~Engelking, {\em General topology}, Heldermann Verlag, Berlin, second ed.,
  1989.

\bibitem{gj}
L.~Gillman and M.~Jerison, {\em {Rings of continuous functions}}, Van Nostrand,
  Princeton, 1960.

\bibitem{HernandezTkachenko17}
C.~Hern\'{a}ndez and M.~Tkachenko, {\em Reflecting some properties of
  topological groups}, Topology Appl. {\bf 221} (2017), 501--516.

\bibitem{HernandezHernandezHrusak18}
F.~Hern\'{a}ndez-Hern\'{a}ndez and M.~Hru\v{s}\'{a}k, {\em Topology of
  {M}r\'{o}wka-{I}sbell spaces}, Pseudocompact topological spaces, Dev. Math.,
  vol.~55, Springer, Cham, 2018, pp.~253--289.

\bibitem{J}
I. Juhász, {\em Cardinal functions in topology -- ten years later},
Math. Centre Tracts 123, Amsterdam, 1980


\bibitem{Juhasz96}
I.~Juh{\'a}sz, {\em Cardinal functions on continuous images}, Proceedings of
  the Tennessee Topology Conference, World Scientific, 1996, pp.~89--94.

\bibitem{JSSz}
I.~Juh{\'a}sz, L. Soukup, and Z. Szentmikl{\'o}ssy,
{\em Anti-Urysohn spaces},
Topology and its Applications, 213 (2016), 8–-23.

\bibitem{koppelberg:homogeneousba}
S.~Koppelberg, {\em Homogeneous {B}oolean algebras may have nonsimple
  automorphism groups}, Topology Appl. {\bf 21} (1985), no.~2, 103--120.

\bibitem{Mrowka77}
S.~Mr\'{o}wka, {\em {Some set-theoretic constructions in topology}}, Fund.
  Math. {\bf 94} (1977), 83--92.

\bibitem{Sap}
B. Shapirovskii, {\em Canonical sets and character. Density and weight in compact spaces},
Soviet Math. Dokl. 15 (1974), 1282-1287.

\bibitem{Tkachenko80}
M.~G. Tka\v{c}enko, {\em Continuous mappings onto spaces of smaller weight},
  Vestnik Moskov. Univ. Ser. I Mat. Mekh. (1980), no.~2, 37--40, 102--103.

\end{thebibliography}
\end{document}